\documentclass[11pt]{amsart}

\usepackage[T1]{fontenc}
\usepackage[utf8]{inputenc}
\usepackage{amsmath,amssymb,amsthm,mathtools}
\usepackage{hyperref}
\usepackage{enumitem}
\usepackage{mathrsfs}

\hypersetup{
  colorlinks=true,
  linkcolor=blue,
  citecolor=blue,
  urlcolor=blue
}

\newtheorem{theorem}{Theorem}[section]
\newtheorem{proposition}[theorem]{Proposition}
\newtheorem{remark}[theorem]{Remark}
\newtheorem*{conjecture}{Conjecture}

\newcommand{\R}{\mathbb{R}}
\newcommand{\Ric}{\operatorname{Ric}}

\begin{document}

\title[ACS condition on minimal isoparametric hypersurfaces]{ACS CONDITION ON MINIMAL ISOPARAMETRIC HYPERSURFACES OF POSITIVE RICCI CURVATURE IN UNIT SPHERES}

\author{Niang Chen}
\address{Faculty of Arts and Sciences, Beijing Normal University, Zhuhai 519087, China}
\email{chenniang@bnu.edu.cn}

\subjclass[2020]{53C42}
\keywords{Differential geometry, isoparametric theory, Morse index, minimal hypersurface}

\begin{abstract}
We study the Ambrozio--Carlotto--Sharp (ACS) criterion on minimal isoparametric hypersurfaces $N^{n+1}\subset S^{n+2}$ with positive Ricci curvature, motivated by the Schoen--Marques--Neves conjecture on Morse index.
For $g=4$ distinct principal curvatures with multiplicities $m_1,m_2$, we prove that the pointwise ACS inequality holds if and only if $\min\{m_1,m_2\}\ge 4$. Sufficiency is obtained via a moment-relaxation technique yielding the sharp bound $4a^2$ on the quadratic part of the integrand; necessity follows from an explicit extremal configuration in the top eigenspace of the shape operator. We also verify the ACS condition for $g=3$ with $m_1=m_2\in\{4,8\}$.
As a consequence, for any closed embedded minimal hypersurface $M^n$ in such an ambient manifold, $\operatorname{index}(M)\ge \tfrac{2}{d(d-1)}\, b_1(M)$ with $d=n+3$.
\end{abstract}

\maketitle

\section{Introduction}

In 1970, Lawson \cite{Lawson} constructed minimal surfaces in $S^3$ of arbitrary genus. This work initiated a new direction in differential geometry by moving the study of minimal surfaces from Euclidean space to positively curved ambient spaces. Later, Hsiang and Lawson \cite{HsiangLawson} extended related techniques to higher-dimensional spheres. On the other hand, Lawson's work also clarified topological restrictions for minimal surfaces in positively curved manifolds and influenced the study of incompressible surfaces in three-manifolds by Schoen and Yau \cite{SchoenYau}.

Besides the existence problem, an equally important question is to understand the geometric and topological properties of minimal hypersurfaces. Among these, the Morse index is a central analytical invariant: it measures stability and reflects deep information about the geometry of the hypersurface. It has been conjectured that the index controls the basic topology of minimal hypersurfaces in an effective way.

\begin{conjecture}[Schoen-Marques-Neves, see \cite{MarquesNeves}]
Let $(N^{n+1},g)$ be a closed Riemannian manifold with positive Ricci curvature. Then there exists a positive constant $C$ such that, for every closed embedded minimal hypersurface $M^n\subset N^{n+1}$,
\[
\operatorname{index}(M)\ge C\, b_1(M).
\]
\end{conjecture}

A particular ambient manifold for which this conjecture is known is the round sphere. This was proved by Savo \cite{Savo}, building on earlier work of Ros \cite{Ros}. For flat three-dimensional tori, Ros proved that the index of a closed minimal surface is bounded from below by an affine function of the genus. Since, for a closed orientable surface, the first Betti number is twice the genus, this is a close analogue of the conjectural picture above.

Ambrozio, Carlotto and Sharp \cite{ACS1} developed a general framework to study this conjecture in other positively curved ambient manifolds. Their method gives a lower bound for the Morse index in terms of the first Betti number under a suitable extrinsic inequality.

\begin{theorem}[Ambrozio-Carlotto-Sharp \cite{ACS1}]\label{thm:ACS}
Let $(N^{n+1},g)$ be a Riemannian manifold isometrically embedded in some Euclidean space $\R^d$. Let $M^n$ be a closed embedded minimal hypersurface of $N^{n+1}$. Assume that for every nonzero vector field $X$ on $M$,
\begin{equation}\label{eq:ACS}
\begin{aligned}
&\int_M \Bigg[\sum_{k=1}^n R^N(e_k,X,e_k,X)+\Ric_N(\nu,\nu)|X|^2\Bigg] \, dM \\
>
&\int_M \Bigg[\sum_{k=1}^n |\Pi(e_k,X)|^2 
+\sum_{k=1}^n |\Pi(e_k,\nu)|^2|X|^2\Bigg] \, dM,
\end{aligned}
\end{equation}
where $R^N$ is the curvature tensor of $N$, $\Pi$ is the second fundamental form of the embedding $N\hookrightarrow \R^d$, $\nu$ is a local unit normal vector field along $M$, and $\{e_k\}_{k=1}^n$ is a local orthonormal frame on~$M$.
Then
\[
\operatorname{index}(M)\ge \frac{2}{d(d-1)}\, b_1(M).
\]
\end{theorem}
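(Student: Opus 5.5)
The argument is the Ros--Savo strategy of testing the Jacobi operator against vector fields built from harmonic one-forms and ambient coordinate directions. Set $\mathcal H$ to be the space of harmonic one-forms on $M$, so $\dim\mathcal H=b_1(M)$ by Hodge theory. Identify each $\omega\in\mathcal H$ with its dual vector field $X=\omega^\sharp$, and let $\{E_1,\dots,E_d\}$ be the standard orthonormal basis of $\R^d$. For each pair $i<j$ we form the test function
\[
u^{X}_{ij}=\langle X\wedge\nu,\;E_i\wedge E_j\rangle=\langle X,E_i\rangle\langle \nu,E_j\rangle-\langle X,E_j\rangle\langle\nu,E_i\rangle ,
\]
which is a smooth function on $M$ (a section of the normal bundle, since $M$ is two-sided when embedded in the relevant setting, or locally otherwise). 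This gives $\binom d2=\tfrac{d(d-1)}2$ linear maps $\mathcal H\to C^\infty(M)$.

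The key computation is to sum the index form over $i<j$:
\[
\sum_{i<j} Q(u^X_{ij},u^X_{ij})=\sum_{i<j}\int_M\Big(|\nabla u^X_{ij}|^2-(\Ric_N(\nu,\nu)+|A|^2)(u^X_{ij})^2\Big).
\]
Step one: $\sum_{i<j}(u^X_{ij})^2=|X\wedge\nu|^2=|X|^2$. Step two: expand $\nabla_{e_k}u^X_{ij}$ using the ambient flat connection of $\R^d$. We split $D_{e_k}X=\nabla^M_{e_k}X+A(e_k,X)\nu+\Pi(e_k,X)$ and $D_{e_k}\nu=-A(e_k)+\Pi(e_k,\nu)$. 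Summing squares over $i<j$ then yields
\[
\sum_{i<j}|\nabla u^X_{ij}|^2=|\nabla X|^2+|A(X)|^2+\sum_k|\Pi(e_k,X)|^2+\sum_k|\Pi(e_k,\nu)|^2|X|^2+(\text{cross terms}).
\]
Many cross terms vanish because $\Pi$ is normal to $N$ while $X$, $\nu$, and $A(e_k)$ are tangent to $N$. The surviving cross term pairs $\nabla X$ with $A$; using that $X$ is dual to a closed and co-closed form together with minimality, it integrates to zero or combines into $-|A(X)|^2$-type terms.

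Step three: apply the Bochner formula for harmonic one-forms, $\int|\nabla X|^2=-\int\Ric_M(X,X)$, and Gauss's equation with minimality, $\Ric_M(X,X)=\sum_kR^N(e_k,X,e_k,X)-|A(X)|^2$. After cancellation with $|A|^2|X|^2$ and $|A(X)|^2$ terms, we expect
\[
\sum_{i<j}Q(u^X_{ij},u^X_{ij})=-\int_M\Big[\sum_kR^N(e_k,X,e_k,X)+\Ric_N(\nu,\nu)|X|^2-\sum_k|\Pi(e_k,X)|^2-\sum_k|\Pi(e_k,\nu)|^2|X|^2\Big].
\]
By hypothesis \eqref{eq:ACS} this is negative for $X\neq0$. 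Bookkeeping the cross terms and the $A$-terms in this step is the main obstacle; I would carry it out in a frame diagonalizing $A$ and check it against the sphere case, where $\Pi$ is explicit.

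Step four is linear algebra. Suppose $\operatorname{index}(M)<\tfrac{2}{d(d-1)}b_1(M)$, and let $V$ be the span of the negative eigenfunctions of the Jacobi operator. Requiring every $u^X_{ij}$ to be $L^2$-orthogonal to $V$ imposes $\tfrac{d(d-1)}2\operatorname{index}(M)<b_1(M)$ linear conditions on $X\in\mathcal H$. Hence some nonzero $X$ satisfies all of them, so $Q(u^X_{ij},u^X_{ij})\ge0$ for every $i<j$, contradicting the negativity of the sum. Therefore $\operatorname{index}(M)\ge\tfrac{2}{d(d-1)}b_1(M)$.
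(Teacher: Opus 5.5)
Your architecture is the Ambrozio--Carlotto--Sharp argument; the paper only quotes this theorem and gives no proof of its own. The shared pieces are: the test sections $u_{ij}=\langle X\wedge\nu,E_i\wedge E_j\rangle$ built from harmonic fields, summing the index form over $i<j$, Bochner plus Gauss, and the dimension count of Step four, which is correct as written. The target identity at the end of Step three is also correct. The problem is Step two, which carries all the content. The $A$-term is recorded with the wrong form, and the cross term you expect to survive actually vanishes, so Step two as written does not lead to that identity.

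Put $W_k=\nabla_{e_k}X\wedge\nu+\Pi(e_k,X)\wedge\nu-X\wedge Ae_k+X\wedge\Pi(e_k,\nu)$, so that $\sum_{i<j}|\nabla u_{ij}|^2=\sum_k|W_k|^2$. Every cross term in $|W_k|^2$ vanishes pointwise. Besides $\Pi$ being normal to $N$, this uses $\langle\nu,X\rangle=0$ and $\langle\nu,Ae_k\rangle=0$. In particular, the pairing of $\nabla X$ with $A$ that you expected to survive is proportional to $\langle\nabla_{e_k}X,X\rangle\langle\nu,Ae_k\rangle-\langle\nabla_{e_k}X,Ae_k\rangle\langle\nu,X\rangle=0$, with no appeal to harmonicity. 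The $A$-contribution comes instead from the diagonal term $\sum_k|X\wedge Ae_k|^2=|A|^2|X|^2-|AX|^2$, not $+|AX|^2$ as you wrote. Thus
\[
\sum_{i<j}|\nabla u_{ij}|^2=|\nabla X|^2+|A|^2|X|^2-|AX|^2+\sum_k|\Pi(e_k,X)|^2+|X|^2\sum_k|\Pi(e_k,\nu)|^2 .
\]
With your version, subtracting $(\Ric_N(\nu,\nu)+|A|^2)|X|^2$ and applying Bochner and Gauss leaves a spurious $\int_M(2|AX|^2-|A|^2|X|^2)$. With the correct version, $|A|^2|X|^2$ cancels the potential term. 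Bochner together with Gauss and $H=0$ then gives $\int_M(|\nabla X|^2-|AX|^2)=-\int_M\sum_kR^N(e_k,X,e_k,X)$, which is exactly your displayed formula. Harmonicity of $X$ is used only there.

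A minor point: two-sidedness is not needed. Since $u_{ij}$ and $\nu$ change sign together, $u_{ij}\nu$ is a globally defined normal section on which the index form is evaluated.
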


For convenience, we refer to \eqref{eq:ACS} as the \emph{ACS condition}. The same strategy has since been extended to free boundary and weighted settings; see \cite{ACS2,ChenGeZhang,ImperaRimoldi,ImperaRimoldiSavo}.

In this paper we apply the ACS framework to minimal isoparametric hypersurfaces with positive Ricci curvature in the unit sphere. We hope to provide further evidence for the Schoen-Marques-Neves conjecture by adding new examples in this class.

For an isoparametric hypersurface $N^{n+1}\subset S^{n+2}$, the number $g$ of distinct principal curvatures can only be $1,2,3,4$ or $6$, and the multiplicities satisfy $m_i=m_{i+2}$ (indices modulo $g$). Hence there are at most two distinct multiplicities, say $m_1$ and $m_2$; see \cite{CecilRyan,GeQianTangYanOverview}. For a \emph{minimal} isoparametric hypersurface $N^{n+1}\subset S^{n+2}$, the cases with positive Ricci curvature are the following \cite{QianTangYan}:
\begin{enumerate}[label=(\arabic*)]
  \item when $g=1$, the Ricci curvature is obviously positive;
  \item when $g=2$, the Ricci curvature is positive unless $N$ is $S^1(r_1)\times S^n(r_2)$;
  \item when $g=3$, the multiplicities are equal and the Ricci curvature is positive for $m_1=m_2>1$;
  \item when $g=4$, the Ricci curvature is positive provided $m_1,m_2\ge 2$.
\end{enumerate}

Our main result is the following.

\begin{theorem}\label{thm:main}
Let $N^{n+1}\subset S^{n+2}$ be a minimal isoparametric hypersurface. Then the following statements hold.
\begin{enumerate}[label=(\arabic*)]
  \item If $g=3$ and $m_1=m_2\in\{4,8\}$, then $N$ satisfies the ACS condition.
  \item If $g=4$ and
  $
  \min\{m_1,m_2\}\ge 4,
 $
  then $N$ satisfies the ACS condition.
\end{enumerate}
Consequently, if $M^n$ is a closed minimal hypersurface of any of the above ambient manifolds, then
\[
\operatorname{index}(M)\ge \frac{2}{d(d-1)}\, b_1(M),
\]
where $d=n+3$ for the natural embedding $N\subset S^{n+2}\subset \R^{n+3}$.
\end{theorem}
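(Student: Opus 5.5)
We plan to verify the ACS condition \eqref{eq:ACS} \emph{pointwise}: strict positivity of the integrand at every point where $X\neq 0$ gives the strict integral inequality, and Theorem~\ref{thm:ACS} then yields the index bound with $d=n+3$. We write everything in terms of the shape operator $A$ of $N\subset S^{n+2}$. Let $\xi$ be the unit normal of $N$ in $S^{n+2}$ and $x$ the position vector. The second fundamental form of $N\subset\R^{n+3}$ is $\Pi(u,v)=\langle Au,v\rangle\xi-\langle u,v\rangle x$, so $|\Pi(u,v)|^2=\langle Au,v\rangle^2+\langle u,v\rangle^2$.

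By the Gauss equation and minimality of $N$ ($\operatorname{tr}A=0$), $\Ric_N(v,v)=n|v|^2-|Av|^2$. Fix $p\in M$, the unit normal $\nu\in T_pN$ of $M$, and $X\perp\nu$, and set $a=\langle A\nu,\nu\rangle$. Then $\sum_k\langle Ae_k,e_k\rangle=-a$, $\sum_k\langle Ae_k,X\rangle^2=|AX|^2-\langle AX,\nu\rangle^2$ and $\sum_k\langle Ae_k,\nu\rangle^2=|A\nu|^2-a^2$. A direct computation turns \eqref{eq:ACS} pointwise into
\[
Q(\nu,X):=\bigl(2n-2-2|A\nu|^2+a^2\bigr)|X|^2-a\langle AX,X\rangle-2|AX|^2+2\langle AX,\nu\rangle^2>0
\]
for all unit $\nu$ and all nonzero $X\perp\nu$. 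This reduces the problem to a finite-dimensional inequality involving only the principal curvatures $\lambda_i=\cot(\theta+(i-1)\pi/g)$ and their multiplicities, where $\theta$ is fixed by minimality.

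Case $g=3$: here $\lambda\in\{\sqrt3,0,-\sqrt3\}$ with common multiplicity $m$ and $n+1=3m$. Normalize $|X|=1$ and write $\nu$ and $X$ through their squared components $s_i,t_i$ in the three eigenspaces. Then $|A\nu|^2\le 3$, $|AX|^2\le 3$, and $a,\langle AX,X\rangle$ are linear in $s,t$. The only coupling is the term $\langle AX,\nu\rangle^2\ge 0$, which we drop. It then suffices to check $2n-2-6+a^2-a\langle AX,X\rangle-6>0$ with $|a|,|\langle AX,X\rangle|\le\sqrt3$, i.e.\ roughly $6m-16\ge 0$ after absorbing $-a\langle AX,X\rangle+a^2\ge -3/4\cdot 3$. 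We expect this to hold comfortably for $m=4,8$ and will verify the constants directly.

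Case $g=4$, which we expect to be the main obstacle: the eigenvalues $\lambda_1,\dots,\lambda_4$ are unbounded as $\theta$ varies with $(m_1,m_2)$, and $|A\nu|^2$ and $|AX|^2$ can be large, so crude bounds fail. We first use the constant identity $\sum m_i\lambda_i^2=(n+1)(g-1)$ for minimal isoparametric hypersurfaces (value $3(n+1)$) and the cotangent addition formulas to express everything in $\theta$ and $n=2(m_1+m_2)-1$. Then comes a moment relaxation. Regard $\mu=\sum s_i\delta_{\lambda_i}$ (from $\nu$) and $\eta=\sum t_i\delta_{\lambda_i}$ (from $X$) as probability measures on the four eigenvalues. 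The terms $|A\nu|^2,a$ are the first two moments of $\mu$, and $|AX|^2,\langle AX,X\rangle$ those of $\eta$. The coupling term is bounded below by $0$ and above via the constraint $X\perp\nu$, which enters only through the Cauchy--Schwarz type coupling $\langle AX,\nu\rangle^2\le(|A\nu|^2-a^2)$.

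Minimizing $Q$ over $\eta$ for fixed $\mu$ is then a linear program on the moment vector $(1,\lambda,\lambda^2)$, whose extremes sit at the top eigenvalue. We aim to show that the quadratic part of the integrand in $(\nu,X)$ is bounded by $4a^2$ plus terms controlled by $2n-2$, and that $2n-2$ dominates the remaining constants exactly when $m_1+m_2$ is large enough relative to $\cot^2\theta$, which we will check holds for $\min\{m_1,m_2\}\ge 4$. The delicate point is the regime where $\nu$ and $X$ both lie near the largest-eigenvalue eigenspace; there we will compute $Q$ exactly. This eigenspace has dimension $\ge 4$, which leaves room to choose $X\perp\nu$ while optimizing, and we use the explicit value of $\lambda_1^2$ in terms of $m_1/m_2$ to close the estimate.
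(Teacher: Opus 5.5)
Your reduction is correct: with $|X|=1$, your $Q$ is exactly the paper's $\Delta$ from Proposition~\ref{prop:Delta}. Your $g=3$ estimate is essentially the paper's. The gap is in the $g=4$ case, in the bound your closing step relies on.

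Let $\alpha=\max_i|\lambda_i|$. After flipping orientation if necessary, $\alpha=\lambda_1$, its eigenspace $E_\alpha$ has dimension $q=\min\{m_1,m_2\}$, and $\lambda_4=-\beta$ with $\beta=\frac{\alpha+1}{\alpha-1}\le\alpha$. Your ``$4a^2$'' has to mean $4\alpha^2$, not your $a=\langle A\nu,\nu\rangle$. You aim to show that the quadratic part $2|AX|^2+2|A\nu|^2-a^2+a\langle AX,X\rangle$ is at most $4\alpha^2$, i.e.\ $Q\ge 2n-2-4\alpha^2$. This is false whenever $\beta>\alpha/2$.

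To see this, take orthonormal $X,u\in E_\alpha$, a unit $w\in E_{-\beta}$, and set $\nu=\sqrt t\,u+\sqrt{1-t}\,w$. Then $\langle AX,\nu\rangle=0$, $|AX|^2=\alpha^2$, $\langle AX,X\rangle=\alpha$, $a=t\alpha-(1-t)\beta$, and $|A\nu|^2=(\alpha-\beta)a+\alpha\beta$. Hence
\[
Q=2n-2-\bigl(2\alpha^2+2\alpha\beta+(3\alpha-2\beta)a-a^2\bigr).
\]
On $a\in[-\beta,\alpha]$ the bracket is maximal at $a=\alpha$, with value $4\alpha^2$, only if $\beta\le\alpha/2$. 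Otherwise its maximum is $4\alpha^2+(\beta-\tfrac{\alpha}{2})^2$, attained at $a=\tfrac{3\alpha}{2}-\beta$.

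For $q=4$, the condition $\beta\le\alpha/2$ holds only when $s=m_1+m_2\ge15$. A concrete failure is the homogeneous example $(m_1,m_2)=(4,5)$. There $\alpha=\frac{3+\sqrt5}{2}$ and $\beta=\sqrt5$, and the configuration above gives $Q=\frac{39(3-\sqrt5)}{8}\approx3.72$. Your claimed lower bound is $2n-2-4\alpha^2=18-6\sqrt5\approx4.58$, so it is violated. Pairs with $q\ge5$ and comparable multiplicities fail in the same way.

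Consequently, your claim that the extremes of the linear program ``sit at the top eigenvalue'' is wrong exactly where it matters. For fixed $\nu$, the optimal $X$ in your relaxation lies in $E_\alpha$ only if $a\ge-2(\alpha-\beta)$. Even then, the optimal $\nu$ mixes $E_\alpha$ with $E_{-\beta}$.

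The missing ingredient is the asymmetry condition $\beta\le\alpha/2$. The relaxation $\lambda^2\le(\alpha-\beta)\lambda+\alpha\beta$, followed by monotonicity first in $\langle AX,X\rangle$ and then in $a$, closes at $4\alpha^2$ only under this condition. This is why the paper uses the sharp bound only for $q=4$, $s\ge15$ (Proposition~\ref{prop:g4opt}). It covers the remaining cases with weaker but universally valid estimates:
\begin{itemize}
\item Completing the square, $a^2-a\langle AX,X\rangle\ge-\tfrac14\langle AX,X\rangle^2\ge-\tfrac14|AX|^2$, bounds the quadratic part by $\tfrac{17}{4}\alpha^2$. This suffices for $q=4$, $s\le19$.
\item The crude bound $5\alpha^2$ suffices for $q\ge5$.
\end{itemize}

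To repair your argument, either adopt this case split or carry the relaxation through exactly. The exact maximum of the quadratic part is $4\alpha^2+\max\{0,\beta-\alpha/2\}^2\le\tfrac{17}{4}\alpha^2$, and it is attained with $\langle AX,\nu\rangle=0$. This maximum does stay below $2n-2=4(s-1)$ for every pair with $q\ge4$. However, that needs a separate check of the extra term; the comparison $2n-2>4\alpha^2$ alone does not prove it.
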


Gorodski, Mendes and Radeschi \cite{GMR} also produced many examples supporting the Schoen-Marques-Neves conjecture. They obtained robust index lower bounds for minimal hypersurfaces in several isoparametric and symmetric ambient spaces by a different method.
Our contribution is instead to verify a sufficient ACS inequality for the ambient manifold itself. In the $g=4$ case, we work with the pointwise quantity $\Delta(X,\nu)$ and obtain progressively tighter bounds. A direct estimate of $\Delta$ using only the largest absolute curvature yields the range $\min\{m_1,m_2\}\ge 5$. By retaining a quadratic term in Proposition~\ref{prop:Delta} and completing the square, we lower the coefficient to $\tfrac{17}{4}$, which extends the result to $\min\{m_1,m_2\}=4$ with $m_1+m_2\le 19$. Finally, a moment-relaxation technique (Proposition~\ref{prop:g4opt}) that exploits the asymmetry between the largest and smallest principal curvatures produces the sharp coefficient $4$, establishing the ACS condition for all pairs with $\min\{m_1,m_2\}\ge 4$ without restriction on~$m_1+m_2$.
We observe in Remark~\ref{rem:pointwise-threshold} that $\min\{m_1,m_2\}\ge 4$ is also a \emph{necessary} condition for any argument based solely on pointwise positivity of~$\Delta$. Thus our result is sharp with respect to this method.

The cases not covered by Theorem \ref{thm:main} remain open from the point of view of the present computation. More precisely:
\begin{enumerate}[label=(\arabic*)]
  \item for $g=3$, the case $m_1=m_2=2$ is not decided by our argument, while for $m_1=m_2=1$ the Ricci curvature is not positive;
  \item for $g=4$, our estimate does not decide the ACS condition when at least one multiplicity lies in $\{2,3\}$;
  \item for $g=6$, the Ricci curvature is not positive.
\end{enumerate}

\section{Preliminaries}

An isoparametric hypersurface is a hypersurface that arises as a regular level set of an isoparametric function. More precisely, a hypersurface $N^{n+1}$ in a Riemannian manifold $\widetilde N^{n+2}$ is called \emph{isoparametric} if there exists a nonconstant smooth function $f$ on $\widetilde N$ and smooth functions $a,b$ such that
\begin{enumerate}[label=(\arabic*)]
  \item $N=f^{-1}(c)$ for some regular value $c$ of $f$;
  \item $|\nabla f|^2=b(f)$;
  \item $\Delta f=a(f)$.
\end{enumerate}
In a space form, this is equivalent to requiring that all principal curvatures are constant.

For isoparametric hypersurfaces in the unit sphere $S^{n+2}$, the possible numbers of distinct principal curvatures are $1,2,3,4$ and $6$. The cases may be summarized as follows (see \cite{Chi,CecilRyan,MuenznerI,MuenznerII,GeQianTangYanOverview}):
\begin{enumerate}[label=(\arabic*)]
  \item if $g=1$, then $N$ is totally umbilic;
  \item if $g=2$, then $N$ is a Clifford hypersurface $S^k(r)\times S^{n+1-k}(\sqrt{1-r^2})$;
  \item if $g=3$, then all multiplicities are equal and can be $1,2,4$ or $8$;
  \item if $g=6$, then all multiplicities are equal and can be $1$ or $2$;
  \item if $g=4$, then the examples are either homogeneous or of OT-FKM type; among the multiplicity pairs, only $(2,2)$ and $(4,5)$ are not of OT-FKM type.
\end{enumerate}
For OT-FKM hypersurfaces, the multiplicities are of the form $(m_1,m_2)=(m_1,\ell-m_1-1)$ with $\ell\in \mathbb N^*$.

We will also use the following result of M\"unzner.

\begin{theorem}[M\"unzner, see \cite{Chi,MuenznerI,MuenznerII}]\label{thm:Munzner}
Let $M^n$ be an isoparametric hypersurface with $g$ distinct principal curvatures in $S^{n+1}$. Let
\[
\lambda_1=\cot\theta_1>\cdots>\lambda_g=\cot\theta_g,
\qquad \theta_i\in(0,\pi),
\]
be the principal curvatures, with multiplicities $m_1,\dots,m_g$. Then:
\begin{enumerate}[label=(\arabic*)]
  \item there exists a homogeneous polynomial $F$ of degree $g$ on $\R^{n+2}$, called the Cartan-M\"unzner polynomial, such that
  \[
  |\nabla F|^2=g^2 r^{2g-2},
  \qquad
  \Delta F=\frac{m_2-m_1}{2}\, g^2 r^{g-2},
  \]
  where $r$ is the radial function on $\R^{n+2}$;
  \item if $f=F|_{S^{n+1}}$, then the range of $f$ is $[-1,1]$, the only critical values are $\pm1$, and the critical sets $M_\pm=f^{-1}(\pm1)$ are connected focal submanifolds;
  \item for each $c\in(-1,1)$, the level set $M_c=f^{-1}(c)$ is an isoparametric hypersurface, the multiplicities satisfy $m_i=m_{i+2}$ (indices modulo $g$), and
  \[
  \theta_i=\theta_1+\frac{i-1}{g}\pi;
  \]
  in particular, if $g$ is odd then all multiplicities are equal;
  \item the unique minimal member in the isoparametric family is $M_{c_0}=f^{-1}(c_0)$, where
  \[
  c_0=\frac{m_2-m_1}{m_1+m_2}.
  \]
\end{enumerate}
\end{theorem}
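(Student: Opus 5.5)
The plan is to follow Münzner's original strategy. The whole family $\{M_c\}$ is built from the distance function to a focal submanifold. Items (3) and (1) come from analysing the normal geodesics, and item (4) is then a short computation with the Laplacian of $f$. Item (2) is largely a by-product of the construction.

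\textbf{Step 1 (angles and multiplicities).} Fix the isoparametric hypersurface $M$ with unit normal $\xi$ and consider the normal geodesics $\gamma_x(t)=\cos t\,x+\sin t\,\xi(x)$. The point $\gamma_x(t)$ is a focal point exactly when $t\equiv\theta_i \pmod \pi$, with multiplicity $m_i$. The focal sets $M_\pm$ at distance $\theta_1$ and $\theta_g-\pi$ are smooth submanifolds. Every normal geodesic of $M_+$ is normal to all parallel hypersurfaces, so the configuration of focal points along a normal geodesic through $M_+$ is invariant under reflection in $M_+$. Applying this at each focal point shows that consecutive focal points along $\gamma_x$ are equally spaced. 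Since there are exactly $g$ of them in an interval of length $\pi$, this gives $\theta_i=\theta_1+\frac{i-1}{g}\pi$. The same reflection swaps the $i$-th and $(i+2)$-nd focal points, which forces $m_i=m_{i+2}$. For $g$ odd the indices $i$ and $i+2$ run through all residues modulo $g$, so all multiplicities coincide.

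\textbf{Step 2 (the polynomial).} Let $t(x)$ be the spherical distance from $x$ to $M_+$, which takes values in $[0,\pi/g]$. Define $f=\cos(g\,t)$ on $S^{n+1}$ and extend it homogeneously by $F(x)=|x|^g f(x/|x|)$. Along each normal great circle, $t$ is linear, so $f$ restricts to $\cos(g(\cdot))$ there, a trigonometric polynomial of degree $g$. This makes $f$ smooth across $M_\pm$. Moreover $|\nabla_S f|^2=g^2(1-f^2)$, because $|\nabla t|=1$.

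Next use Step 1 to compute the mean curvature of $f^{-1}(c)$ as an explicit function of $c$. The level set $f^{-1}(c)$ sits at distance $t$ with $\cos(gt)=c$, so its principal curvatures are $\cot(\theta_i-t)$. This turns $\Delta_S f$ into a polynomial expression in $f$, and translating to $\R^{n+2}$ gives $\Delta F=\frac{m_2-m_1}{2}g^2r^{g-2}$. Now $\Delta F$ is homogeneous of degree $g-2$ and equals $r^{g-2}$ times a constant, so $F-\text{const}\cdot r^g$ is harmonic up to lower-order correction. A homogeneous function that is harmonic, or has polynomial Laplacian, and is smooth away from $0$ with bounded growth is a polynomial. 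This yields item (1).

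The hardest step is the rigorous global smoothness of $f$ near the focal sets, together with the passage from ``$F$ is $C^\infty$ off the origin with polynomial Laplacian'' to ``$F$ is a polynomial.'' I would first try expanding $f$ in spherical harmonics: the eigenvalue equation obtained from $\Delta_S f$ shows that only finitely many degrees occur. Item (2) then follows because $f=\pm1$ exactly on $M_\pm$, and $f$ has no other critical points since $|\nabla_S f|^2=g^2(1-f^2)$. Connectedness of $M_\pm$ is inherited from the connectedness of $M$ via the normal exponential map.

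\textbf{Step 3 (the minimal leaf).} On a level set the mean curvature satisfies
\[
|\nabla_S f|\,H=\Delta_S f-\frac{\operatorname{Hess} f(\nabla f,\nabla f)}{|\nabla f|^2},
\]
and the last term equals $\tfrac12\,\frac{d}{df}\bigl(g^2(1-f^2)\bigr)=-g^2 f$. From (1), restricting to the sphere gives $\Delta_S f=\frac{m_2-m_1}{2}g^2-g(g+n)f$, where $n=\frac{g(m_1+m_2)}{2}$. Substituting,
\[
|\nabla_S f|\,H=\frac{g^2}{2}\bigl[(m_2-m_1)-(m_1+m_2)f\bigr].
\]
This vanishes exactly at $c_0=\frac{m_2-m_1}{m_1+m_2}$, which gives item (4).
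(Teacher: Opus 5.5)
The paper gives no proof of this statement; it quotes it from M\"unzner. Your outline is the standard argument going back to M\"unzner: reflection symmetry of focal points for (3), $f=\cos(gt)$ with $t=d(\cdot,M_+)$ for (1)--(2), and the level-set mean-curvature formula for (4). Your Step~3 algebra is correct. The identity you leave implicit in Step~2 ("this turns $\Delta_S f$ into a polynomial expression in $f$") is the following. The leaf $\{t=\mathrm{const}\}$ has mean curvature $\sum_i m_i\cot\bigl(t+\frac{(i-1)\pi}{g}\bigr)$ towards $M_+$, and for $g=2h$
\[
\sin(gt)\sum_{i=1}^{g} m_i\cot\Bigl(t+\frac{(i-1)\pi}{g}\Bigr)
=2h\bigl(m_1\cos^2 ht-m_2\sin^2 ht\bigr)
=\frac{g}{2}\bigl[(m_1-m_2)+(m_1+m_2)\cos gt\bigr].
\]
This is exactly where $m_i=m_{i+2}$ enters, and it gives $\Delta_S f=\frac{m_2-m_1}{2}g^2-g(g+n)f$ off $M_+\cup M_-$.

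The step that fails as written is the smoothness of $f$ across $M_\pm$. That $f$ restricts to $\cos(g\,\cdot)$ on every normal great circle controls only derivatives along those circles, not transversal ones, so it does not give smoothness. Your spherical-harmonic fallback needs $(\Delta_S+g(g+n))f=\mathrm{const}$ on all of $S^{n+1}$, at least weakly, so it presupposes the same regularity.

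What does work is that $f$ depends on $t$ alone and is even in $t$. Thus $f=\phi(t^2)$ with $\phi(u)=\sum_{k\ge 0}(-1)^k g^{2k}u^k/(2k)!$ entire. Moreover $t^2$ is smooth on $S^{n+1}\setminus M_-$, because the normal exponential map of $M_+$ is a diffeomorphism from the open normal disc bundle of radius $\pi/g$ onto $S^{n+1}\setminus M_-$. There are no earlier focal points, since the shape operators $A_v$ of $M_+$ have eigenvalues $\cot(k\pi/g)$, $1\le k\le g-1$. Injectivity comes from the decomposition of the sphere into $M_\pm$ and the parallel leaves. Near $M_-$, write $f=-\cos(g\,t_-)$ with $t_-=d(\cdot,M_-)$.

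The identity for $\Delta_S f$ then extends to all of $S^{n+1}$ by density. Hence $F-\frac{c}{g(g+n)}r^g$ is exactly harmonic on $\R^{n+2}\setminus\{0\}$, not merely ``up to lower-order correction''. It is bounded near $0$, so it is a harmonic polynomial. For odd $g$ you must add that $c=0$ because $m_1=m_2$, since $r^g$ is then not a polynomial.

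Two facts you assert rather than prove carry most of the content of the theorem:
\begin{enumerate}
\item Every normal geodesic of $M_+$ is normal to the whole family. The fibre of the focal map over $p\in M_+$ maps onto the unit normal sphere $S^{m_1}$ at $p$ by an open--closed argument, and then $A_{-v}=-A_v$ gives the reflection symmetry.
\item $S^{n+1}$ is the disjoint union of $M_\pm$ and the parallel leaves. This needs $M$ closed and connected.
\end{enumerate}
Both should be proved, not assumed.
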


\section{The computations}

Let $N^{n+1}\subset S^{n+2}$ be a minimal isoparametric hypersurface. To check the ACS condition, it is enough to study the pointwise integrand in \eqref{eq:ACS}.

Fix a closed minimal hypersurface $M^n\subset N^{n+1}$. Let $\nu$ be a local unit normal vector field of $M$ in $N$, and let $\{e_k\}_{k=1}^n$ be a local orthonormal frame on $M$. We define
\begin{equation}\label{eq:DeltaDef}
\begin{aligned}
\Delta(X,\nu):={}&\sum_{k=1}^n R^N(e_k,X,e_k,X)+\Ric_N(\nu,\nu)|X|^2 \\
&-\sum_{k=1}^n |\Pi(e_k,X)|^2-\sum_{k=1}^n |\Pi(e_k,\nu)|^2|X|^2.
\end{aligned}
\end{equation}
If $\Delta(X,\nu)>0$ for every nonzero tangent vector $X$, then $N$ satisfies the ACS condition.

Let $\{\varepsilon_i\}_{i=1}^{n+1}$ be a local orthonormal frame of principal directions of $N\subset S^{n+2}$, with corresponding principal curvatures $\{\lambda_i\}_{i=1}^{n+1}$. Write
\[
X=\sum_{i=1}^{n+1} x_i\varepsilon_i,
\qquad
\nu=\sum_{i=1}^{n+1} \nu_i\varepsilon_i.
\]
Since \eqref{eq:DeltaDef} is homogeneous in $X$, we may assume $|X|=1$. We also have $|\nu|=1$ and $\langle X,\nu\rangle=0$.

\begin{proposition}\label{prop:Delta}
With the notation above,
\begin{align*}
\Delta(X,\nu)
={}&(2n-2)-2\sum_{i=1}^{n+1} x_i^2\lambda_i^2-2\sum_{i=1}^{n+1} \nu_i^2\lambda_i^2
+2\Big(\sum_{i=1}^{n+1} x_i\nu_i\lambda_i\Big)^2 \\
&\qquad
+\Big(\sum_{i=1}^{n+1} \nu_i^2\lambda_i\Big)^2
-\Big(\sum_{i=1}^{n+1} \nu_i^2\lambda_i\Big)
 \Big(\sum_{i=1}^{n+1} x_i^2\lambda_i\Big).
\end{align*}
\end{proposition}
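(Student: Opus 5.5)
The plan is to express every term of \eqref{eq:DeltaDef} through the shape operator $A$ of $N\subset S^{n+2}$, using the Gauss equation for the intrinsic curvature and the splitting of $\Pi$ into its spherical and normal parts. Write $\xi$ for the unit normal of $N$ in $S^{n+2}$ and $p$ for the position vector, so $A\varepsilon_i=\lambda_i\varepsilon_i$. The second fundamental form of $N\hookrightarrow\R^{n+3}$ is
\[
\Pi(u,v)=-\langle u,v\rangle\, p+\langle Au,v\rangle\,\xi .
\]
Since $p\perp\xi$ and both are unit vectors, this gives $|\Pi(u,v)|^2=\langle u,v\rangle^2+\langle Au,v\rangle^2$. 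The Gauss equation in the unit sphere gives
\[
R^N(a,b,a,b)=|a|^2|b|^2-\langle a,b\rangle^2+\langle Aa,a\rangle\langle Ab,b\rangle-\langle Aa,b\rangle^2 .
\]
Minimality of $N$ means $\operatorname{tr}A=\sum_i\lambda_i=0$, and this is where the hypothesis enters. Tracing the Gauss equation then gives $\Ric_N(Y,Y)=n|Y|^2-|AY|^2$ for $Y\in TN$.

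The key bookkeeping step is to complete the frame $\{e_k\}_{k=1}^n$ of $TM$ to the orthonormal frame $\{e_1,\dots,e_n,\nu\}$ of $TN$. Sums over $k$ then become traces over $TN$ minus the $\nu$-contribution.
\begin{itemize}
\item First, $\sum_k R^N(e_k,X,e_k,X)=\Ric_N(X,X)-R^N(\nu,X,\nu,X)$. With $|X|=|\nu|=1$ and $\langle X,\nu\rangle=0$ this equals
\[
\Big(n-\sum_i x_i^2\lambda_i^2\Big)-\Big(1+\langle A\nu,\nu\rangle\langle AX,X\rangle-\langle A\nu,X\rangle^2\Big).
\]
\item Second, $\Ric_N(\nu,\nu)|X|^2=n-\sum_i\nu_i^2\lambda_i^2$.
\item Third, using the formula for $|\Pi|^2$,
\[
\sum_k|\Pi(e_k,X)|^2=\big(|X|^2-\langle X,\nu\rangle^2\big)+\big(|AX|^2-\langle AX,\nu\rangle^2\big)=1+\sum_i x_i^2\lambda_i^2-\langle AX,\nu\rangle^2 .
\]
\item Fourth, since $\langle e_k,\nu\rangle=0$,
\[
\sum_k|\Pi(e_k,\nu)|^2=|A\nu|^2-\langle A\nu,\nu\rangle^2=\sum_i\nu_i^2\lambda_i^2-\Big(\sum_i\nu_i^2\lambda_i\Big)^2 .
\]
\end{itemize}

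Finally I substitute the coordinate expressions
\[
\langle AX,X\rangle=\sum_i x_i^2\lambda_i,\qquad \langle A\nu,\nu\rangle=\sum_i\nu_i^2\lambda_i,\qquad \langle AX,\nu\rangle=\sum_i x_i\nu_i\lambda_i .
\]
Adding the four pieces with the signs in \eqref{eq:DeltaDef}, the constants combine to $2n-2$. Each of $\sum_i x_i^2\lambda_i^2$ and $\sum_i\nu_i^2\lambda_i^2$ appears twice with a minus sign. The mixed term $\langle AX,\nu\rangle^2$ appears twice with a plus sign. The $\langle A\nu,\nu\rangle^2$ term enters once with a plus sign, and the product $\langle A\nu,\nu\rangle\langle AX,X\rangle$ once with a minus sign. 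This is exactly the stated formula.

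The computation is routine. The main places where care is needed are the two terms containing $\langle X,\nu\rangle$: they must be dropped correctly using orthogonality. One must also remember that $H=0$ kills the $\langle AY,Y\rangle\operatorname{tr}A$ term in the Ricci tensor; without minimality an extra term $H\sum_i(x_i^2+\nu_i^2)\lambda_i$ would appear.
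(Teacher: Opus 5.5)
Your proposal is correct and follows essentially the same route as the paper. You split $\Pi$ into its spherical and normal parts, use the Gauss equation with $\operatorname{tr}A=0$ to get $\Ric_N(Y,Y)=n|Y|^2-|AY|^2$, and turn the sums over $\{e_k\}$ into traces over $TN$ minus the $\nu$-contribution; the only difference is cosmetic, since you write the terms invariantly as $\langle AX,X\rangle$, $\langle A\nu,\nu\rangle$, $\langle AX,\nu\rangle$ rather than in principal coordinates, and all four pieces and the final bookkeeping check out.
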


\begin{proof}
Let $\eta$ be a local unit normal vector field of $N$ in $S^{n+2}$, and let $\xi$ be the unit normal vector field of $S^{n+2}$ in $\R^{n+3}$. Then, for tangent vectors $U,V\in TN$,
\[
\Pi(U,V)=\Pi^{\eta}(U,V)+\Pi^{\xi}(U,V)=\langle AU,V\rangle\eta+\langle U,V\rangle\xi,
\]
where $A$ is the shape operator of $N\subset S^{n+2}$. In the principal frame,
\[
A(\varepsilon_i)=\lambda_i\varepsilon_i,
\qquad
\Pi(\varepsilon_i,\varepsilon_j)=\lambda_i\delta_{ij}\eta+\delta_{ij}\xi.
\]

We first compute the curvature term. Since $\{e_1,\dots,e_n,\nu\}$ is an orthonormal frame of $TN$,
\[
\sum_{k=1}^n R^N(e_k,X,e_k,X)=\Ric_N(X,X)-R^N(\nu,X,\nu,X).
\]
By the Gauss equation for $N\subset S^{n+2}$,
\[
K(\varepsilon_i,\varepsilon_j)=1+\lambda_i\lambda_j
\qquad (i\neq j),
\]
and the minimality of $N$ implies $\sum_i \lambda_i=0$. Therefore,
\begin{align*}
\Ric_N(\varepsilon_i,\varepsilon_i)
&=\sum_{j\ne i} K(\varepsilon_j,\varepsilon_i)
=\sum_{j\ne i}(1+\lambda_j\lambda_i)
=n+\lambda_i\sum_{j\ne i}\lambda_j
=n-\lambda_i^2.
\end{align*}
Hence
\begin{equation}\label{eq:RicX}
\Ric_N(X,X)=\sum_{i=1}^{n+1}(n-\lambda_i^2)x_i^2.
\end{equation}
On the other hand,
\begin{align*}
\Pi(\nu,\nu)
&=\Big(\sum_{i=1}^{n+1}\nu_i^2\lambda_i\Big)\eta+\xi,
\\
\Pi(X,X)
&=\Big(\sum_{i=1}^{n+1}x_i^2\lambda_i\Big)\eta+\xi,
\\
\Pi(\nu,X)
&=\Big(\sum_{i=1}^{n+1}x_i\nu_i\lambda_i\Big)\eta,
\end{align*}
since $\langle \nu,X\rangle=0$. Applying the Gauss equation again, we obtain
\begin{align}
R^N(\nu,X,\nu,X)
&=\langle \Pi(\nu,\nu),\Pi(X,X)\rangle-|\Pi(\nu,X)|^2 \notag \\
&=\Big(\sum_{i=1}^{n+1}\nu_i^2\lambda_i\Big)
   \Big(\sum_{i=1}^{n+1}x_i^2\lambda_i\Big)
 +1
 -\Big(\sum_{i=1}^{n+1}x_i\nu_i\lambda_i\Big)^2.
\label{eq:Rnx}
\end{align}
Combining \eqref{eq:RicX} and \eqref{eq:Rnx}, we get
\begin{align}
\sum_{k=1}^n R^N(e_k,X,e_k,X)
={}&\sum_{i=1}^{n+1}(n-\lambda_i^2)x_i^2
-\Big(\sum_{i=1}^{n+1}\nu_i^2\lambda_i\Big)
 \Big(\sum_{i=1}^{n+1}x_i^2\lambda_i\Big) \notag \\
&\qquad -1
+\Big(\sum_{i=1}^{n+1}x_i\nu_i\lambda_i\Big)^2.
\label{eq:firstPart}
\end{align}
Moreover,
\begin{equation}\label{eq:RicNu}
\Ric_N(\nu,\nu)|X|^2=
\sum_{i=1}^{n+1}(n-\lambda_i^2)\nu_i^2.
\end{equation}

We next compute the second fundamental form terms. Since $\{e_1,\dots,e_n,\nu\}$ is an orthonormal basis of~$TN$,
\begin{align*}
\sum_{k=1}^n |\Pi(e_k,X)|^2
&=\sum_{i=1}^{n+1}|\Pi(\varepsilon_i,X)|^2-|\Pi(\nu,X)|^2,
\\
\sum_{k=1}^n |\Pi(e_k,\nu)|^2
&=\sum_{i=1}^{n+1}|\Pi(\varepsilon_i,\nu)|^2-|\Pi(\nu,\nu)|^2.
\end{align*}
Now,
\[
\Pi(\varepsilon_i,X)=x_i\lambda_i\eta+x_i\xi,
\qquad
\Pi(\varepsilon_i,\nu)=\nu_i\lambda_i\eta+\nu_i\xi,
\]
so that
\[
|\Pi(\varepsilon_i,X)|^2=x_i^2(\lambda_i^2+1),
\qquad
|\Pi(\varepsilon_i,\nu)|^2=\nu_i^2(\lambda_i^2+1).
\]
Therefore,
\begin{align}
\sum_{k=1}^n |\Pi(e_k,X)|^2
+\sum_{k=1}^n |\Pi(e_k,\nu)|^2|X|^2
={}&\sum_{i=1}^{n+1}x_i^2(\lambda_i^2+1)
+\sum_{i=1}^{n+1}\nu_i^2(\lambda_i^2+1) \notag \\
&-\Big(\sum_{i=1}^{n+1}x_i\nu_i\lambda_i\Big)^2 \notag \\
&-\Bigg[\Big(\sum_{i=1}^{n+1}\nu_i^2\lambda_i\Big)^2+1\Bigg].
\label{eq:secondPart}
\end{align}

Finally, substituting \eqref{eq:firstPart}, \eqref{eq:RicNu} and \eqref{eq:secondPart} into \eqref{eq:DeltaDef}, and using $\sum_i x_i^2=\sum_i \nu_i^2=1$, we obtain the claimed formula.
\end{proof}

\begin{proposition}\label{prop:g4opt}
Assume $g=4$, set
\[
q:=\min\{m_1,m_2\}=4,
\qquad
s:=m_1+m_2,
\]
and let
\[
a^2=\frac{1+\sqrt{1-4/s}}{1-\sqrt{1-4/s}}.
\]
If $s\ge 15$, then for every orthogonal unit vectors $X,\nu\in T_pN$ one has
\[
\Delta(X,\nu)\ge 4s-4-4a^2>0.
\]
In particular, $N$ satisfies the ACS condition.
\end{proposition}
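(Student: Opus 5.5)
The plan is to reduce the statement to a finite-dimensional optimization over the first two moments of two probability distributions on the four principal curvatures. The first step is to record the spectral data. For $g=4$, Theorem~\ref{thm:Munzner} gives $\theta_i=\theta_1+\frac{(i-1)\pi}{4}$. Hence $\lambda_3=-1/\lambda_1$ and $\lambda_4=-1/\lambda_2$, with multiplicities $m_1,m_2,m_1,m_2$, and therefore $n+1=2s$ and $2n-2=4s-4$. Minimality reads
\[
m_1\Big(\lambda_1-\tfrac1{\lambda_1}\Big)+m_2\Big(\lambda_2-\tfrac1{\lambda_2}\Big)=0 ,
\]
together with $\lambda_2=\frac{\lambda_1-1}{\lambda_1+1}$. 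Solving this with one multiplicity equal to $q=4$ and total $s$, I would check two facts:
\begin{itemize}
\item every principal curvature lies in an interval $[-b,a]$ with $a^2=\frac{1+\sqrt{1-4/s}}{1-\sqrt{1-4/s}}$;
\item the endpoints satisfy $b\le a$ and $ab\le a^2$; this is the asymmetry between the largest and smallest curvatures, and $s\ge15$ should enter here.
\end{itemize}
Positivity of the final constant is then elementary. With $t=\sqrt{1-4/s}$ one has $a^2=\frac{s(1+t)^2}{4}$, so $a^2<s-1$ is equivalent to $(1+t)^2<3+t^2$, that is, to $t<1$.

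Next, I introduce the moments in the formula of Proposition~\ref{prop:Delta}:
\[
\mu=\sum_i\nu_i^2\lambda_i,\quad \xi=\sum_i x_i^2\lambda_i,\quad p=\sum_i\nu_i^2\lambda_i^2,\quad r=\sum_i x_i^2\lambda_i^2,\quad c=\sum_i x_i\nu_i\lambda_i .
\]
Proposition~\ref{prop:Delta} then reads $\Delta=4s-4-Q$, where
\[
Q=2r+2p-2c^2-\mu^2+\mu\xi ,
\]
so the goal is $Q\le 4a^2$. The numbers $\{x_i^2\}$ and $\{\nu_i^2\}$ are probability weights on the spectrum, which lies in $[-b,a]$. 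The inequality $(\lambda-a)(\lambda+b)\le0$ therefore gives the moment relaxation
\[
p\le (a-b)\mu+ab,\qquad r\le (a-b)\xi+ab,\qquad \mu,\xi\in[-b,a].
\]
Dropping $-2c^2\le0$, I obtain
\[
Q\le \Phi(\mu,\xi):=2(a-b)(\mu+\xi)+4ab-\mu^2+\mu\xi .
\]

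The remaining step is to maximize $\Phi$ over the box $[-b,a]^2$. Since $\Phi$ is affine in $\xi$, the maximum occurs at $\xi=a$ or $\xi=-b$. On each edge, $\Phi$ is a concave quadratic in $\mu$, which can be maximized explicitly. One then compares the resulting value with $4a^2$ using the relations between $a$ and $b$ coming from minimality.

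I expect this last comparison to be the main obstacle. The naive bound is $Q\le 2a^2+2a^2+a^2=5a^2$. Reaching the sharp constant $4$ requires that the $+\mu\xi$ and $-\mu^2$ terms genuinely compete, with $\xi=a$ forcing $\mu$ to be small, and that $b$ be strictly smaller than $a$. If the relaxation alone falls short on some edge, I would restore the discarded term $-2c^2$. The cross moment $c$ is controlled from below by the orthogonality $\sum_i x_i\nu_i=0$: for instance, $\sum_i x_i\nu_i(\lambda_i-\mu)=c$ forces correlation when both distributions concentrate at the top eigenvalue $a$. This concentrated case should also be the extremal configuration, with $X$ and $\nu$ orthogonal in the top eigenspace. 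Finally, by the definition \eqref{eq:DeltaDef}, $\Delta>0$ pointwise integrates to the strict inequality \eqref{eq:ACS}.
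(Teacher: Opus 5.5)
\textbf{Gap: the final comparison is left open and the asymmetry you name is the wrong one.} Your setup is the paper's: the relaxation $(\lambda-a)(\lambda+b)\le 0$, dropping $2C^2$, and maximizing $\Phi(\mu,\xi)=2(a-b)(\mu+\xi)+4ab-\mu^2+\mu\xi$ over $[-b,a]^2$. This $\Phi$ is exactly the paper's $G(A,B)$.

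Here $b=-\lambda_4=(a+1)/(a-1)$. The inequality $b\le a$ (equivalently $ab\le a^2$) holds automatically because $m_1=4\le m_2$. So $s\ge 15$ does not enter there, and $b\le a$ does not suffice. On the edge $\xi=a$,
$$\Phi(\mu,a)=-\mu^2+(3a-2b)\mu+2a^2+2ab.$$
Its vertex $(3a-2b)/2$ lies in $[-b,a)$ as soon as $b>a/2$, and then $\max\Phi=4a^2+\tfrac14(a-2b)^2>4a^2$.

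The missing input is $b\le a/2$, i.e. $a^2-3a-2\ge 0$, i.e. $a\ge (3+\sqrt{17})/2$. This is precisely what $s\ge 15$ supplies: $a^2$ increases with $s$, and at $s=15$ it equals $(13+\sqrt{165})/2>(13+3\sqrt{17})/2=\bigl((3+\sqrt{17})/2\bigr)^2$. With $b\le a/2$ in hand, your edge-by-edge plan closes:
\begin{itemize}
\item the vertex on $\xi=a$ is at least $a$, so $\Phi(\mu,a)\le\Phi(a,a)=4a^2$;
\item the edge $\xi=-b$ gives at most $a^2-ab+\tfrac{17}{4}b^2\le\tfrac{33}{16}a^2<4a^2$.
\end{itemize}
This is equivalent to the paper's observation that $\partial_A G\ge 2a-3b\ge a/2>0$ forces $A=a$, after which $G(a,B)$ is increasing on $[-b,a]$.

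\textbf{The fallback via $-2c^2$ cannot work.} The intuition that $\xi=a$ forces $\mu$ to be small also points the wrong way: the extremum has $\mu=\xi=a$. Consider this configuration:
\begin{itemize}
\item $X$ a unit vector in the $a$-eigenspace;
\item $\nu=\alpha u+\beta w$ with $\alpha^2+\beta^2=1$, where $u$ is a unit vector in the $a$-eigenspace orthogonal to $X$ (available since that eigenspace has dimension $4$) and $w$ is a unit vector in the $(-b)$-eigenspace.
\end{itemize}
Then $c=0$, and both moment inequalities become equalities. Hence $\Delta=4s-4-\Phi(\mu,a)$, with $\mu=\alpha^2a-\beta^2b$ ranging over all of $[-b,a]$.

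Consequently, whenever $b>a/2$ the stated bound $\Delta\ge 4s-4-4a^2$ is actually false. For example, with $m_1=4$ this happens for every $s\le 14$. So $b\le a/2$ is not a convenience of the method; it is exactly the hypothesis the proposition needs. Your proposal never establishes it, so the argument is incomplete at its decisive step.
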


\begin{proof}
After interchanging $m_1$ and $m_2$ if necessary, we may assume
\[
m_1=4\le m_2.
\]
For the minimal leaf with $g=4$, we have
\[
\cos 2\theta_1=\sqrt{\frac{m_2}{s}},
\qquad
\sin 2\theta_1=\sqrt{\frac{m_1}{s}}=\sqrt{\frac{4}{s}},
\]
and therefore
\[
\lambda_1^2
=\frac{1+\sqrt{m_2/s}}{1-\sqrt{m_2/s}},
\qquad
\lambda_4^2
=\frac{1+\sqrt{m_1/s}}{1-\sqrt{m_1/s}}.
\]
Since $m_2\ge m_1$ and the function
\[
t\longmapsto \frac{1+\sqrt t}{1-\sqrt t}
\]
is strictly increasing on $(0,1)$, it follows that $\lambda_1^2\ge \lambda_4^2$. By definition of $a$, we therefore have
\[
a=\lambda_1=\cot\theta_1>0.
\]
Moreover, $\lambda_1$ has multiplicity $m_1=4$, so the $a$-eigenspace has dimension exactly~$4$.

Now write $\lambda_4=-b<0$. Then
\[
b:=-\lambda_4
=-\cot\Bigl(\theta_1+\frac{3\pi}{4}\Bigr)
=\tan\Bigl(\theta_1+\frac{\pi}{4}\Bigr)
=\frac{a+1}{a-1}.
\]
Hence all principal curvatures lie in the interval $[-b,a]$.

Next we explain why $b\le a/2$ when $s\ge 15$. Set
\[
x(s):=\sqrt{1-\frac{4}{s}}\in(0,1).
\]
Then
\[
a^2=\frac{1+x(s)}{1-x(s)}.
\]
Since $x(s)$ is increasing in $s$ and the function $x\mapsto \frac{1+x}{1-x}$ is strictly increasing on $(0,1)$, the quantity $a^2$ is increasing in~$s$. Hence the assumption $s\ge 15$ gives
\[
a^2\ge \frac{1+\sqrt{11/15}}{1-\sqrt{11/15}}
=\frac{13+\sqrt{165}}{2}
>
\frac{13+3\sqrt{17}}{2}
=\left(\frac{3+\sqrt{17}}{2}\right)^2.
\]
Therefore
\[
a>\frac{3+\sqrt{17}}{2}.
\]
On the other hand, since $a>1$ and $b=\frac{a+1}{a-1}$, we have
\[
b\le \frac{a}{2}
\iff
\frac{a+1}{a-1}\le \frac{a}{2}
\iff
2(a+1)\le a(a-1)
\iff
a^2-3a-2\ge 0
\iff
a\ge \frac{3+\sqrt{17}}{2}.
\]
Thus the previous inequality for $a$ implies
\[
b\le \frac{a}{2}.
\]

Now define
\[
A:=\sum_{i=1}^{n+1}x_i^2\lambda_i,
\qquad
B:=\sum_{i=1}^{n+1}\nu_i^2\lambda_i,
\qquad
C:=\sum_{i=1}^{n+1}x_i\nu_i\lambda_i,
\]
and
\[
S_x:=\sum_{i=1}^{n+1}x_i^2\lambda_i^2,
\qquad
S_\nu:=\sum_{i=1}^{n+1}\nu_i^2\lambda_i^2.
\]
By Proposition \ref{prop:Delta},
\[
\Delta=(2n-2)-2S_x-2S_\nu+2C^2+B^2-AB.
\]
Equivalently,
\[
\Delta=(2n-2)-F+2C^2,
\qquad
F:=2S_x+2S_\nu-B^2+AB.
\]

Since each principal curvature $\lambda_i$ lies in $[-b,a]$, we have
\[
(\lambda_i-a)(\lambda_i+b)\le 0,
\]
hence
\[
\lambda_i^2\le (a-b)\lambda_i+ab.
\]
Averaging against the weights $x_i^2$ and $\nu_i^2$, we obtain
\[
S_x\le (a-b)A+ab,
\qquad
S_\nu\le (a-b)B+ab.
\]
Therefore
\[
F\le G(A,B),
\]
where
\[
G(A,B):=2(a-b)A+2(a-b)B+4ab-B^2+AB.
\]

Since $A,B\in[-b,a]$, we have
\[
\frac{\partial G}{\partial A}=2(a-b)+B\ge 2(a-b)-b=2a-3b\ge \frac{a}{2}>0,
\]
because $b\le a/2$. Thus $G$ is strictly increasing in $A$ throughout the square
$[-b,a]\times[-b,a]$, and so its maximum is attained at $A=a$. Hence
\[
G(A,B)\le G(a,B)
=-B^2+(3a-2b)B+2a^2+2ab.
\]
This is a concave quadratic in $B$, with axis
\[
B_0=\frac{3a-2b}{2}=\frac{3a}{2}-b\ge a,
\]
since $b\le a/2$. It follows that $G(a,B)$ is increasing on $[-b,a]$, and therefore
\[
F\le G(A,B)\le G(a,a)=4a^2.
\]
Consequently,
\[
\Delta(X,\nu)\ge (2n-2)-4a^2.
\]
Since $n=2s-1$, this becomes
\[
\Delta(X,\nu)\ge 4s-4-4a^2.
\]

Finally, let
\[
x:=\sqrt{1-\frac{4}{s}}\in(0,1).
\]
Then
\[
a^2=\frac{1+x}{1-x},
\qquad
s=\frac{4}{1-x^2},
\]
so
\[
s-1-a^2
=
\frac{3+x^2}{1-x^2}-\frac{1+x}{1-x}
=
\frac{2-2x}{1-x^2}
=
\frac{2}{1+x}>0.
\]
Hence
\[
4s-4-4a^2=4(s-1-a^2)>0,
\]
and therefore $\Delta(X,\nu)>0$ for every orthogonal unit pair $(X,\nu)$.

Moreover, this lower bound is sharp. Indeed, the $a$-eigenspace has dimension $q=4$, so we may choose orthogonal unit vectors $X,\nu$ in that eigenspace. Then
\[
A=B=a,\qquad S_x=S_\nu=a^2,\qquad C=0,
\]
and Proposition \ref{prop:Delta} gives
\[
\Delta(X,\nu)=4s-4-4a^2.
\]
\end{proof}

\begin{proof}[Proof of Theorem \ref{thm:main}]
For simplicity, write
\[
A:=\sum_{i=1}^{n+1} x_i^2\lambda_i,
\qquad
B:=\sum_{i=1}^{n+1} \nu_i^2\lambda_i,
\qquad
C:=\sum_{i=1}^{n+1} x_i\nu_i\lambda_i,
\]
and
\[
S_x:=\sum_{i=1}^{n+1} x_i^2\lambda_i^2,
\qquad
S_\nu:=\sum_{i=1}^{n+1} \nu_i^2\lambda_i^2.
\]
Then Proposition \ref{prop:Delta} gives
\[
\Delta=(2n-2)-2S_x-2S_\nu+2C^2+B^2-AB.
\]
It suffices to show that $\Delta>0$.

\medskip
\noindent\textbf{(1) The case $g=3$.}
For a minimal isoparametric hypersurface with $g=3$, the principal curvatures are
\[
\lambda_1=\sqrt{3},\qquad \lambda_2=0,\qquad \lambda_3=-\sqrt{3}.
\]
If the common multiplicity is $m\in\{4,8\}$, then
\[
n=3m-1\ge 11,
\qquad
2n-2\ge 20.
\]
Since $|X|=|\nu|=1$ and $\langle X,\nu\rangle=0$, we have
\[
\sum_i x_i^2=\sum_i \nu_i^2=1,
\qquad
\sum_i x_i\nu_i=0.
\]
Hence
\[
\sum_i x_i^2\lambda_i^2\le 3,
\qquad
\sum_i \nu_i^2\lambda_i^2\le 3,
\qquad
\left|\Big(\sum_i \nu_i^2\lambda_i\Big)\Big(\sum_i x_i^2\lambda_i\Big)\right|\le 3.
\]
Discarding the nonnegative terms in the expression for $\Delta$, we obtain
\[
\Delta\ge (2n-2)-6-6-3\ge 20-6-6-3>0.
\]
Therefore $N$ satisfies the ACS condition when $g=3$ and $m\in\{4,8\}$.

\medskip
\noindent\textbf{(2) The case $g=4$.}
Set
\[
s:=m_1+m_2,
\qquad
n=2s-1.
\]
By Theorem \ref{thm:Munzner}, the principal curvatures are
\[
\lambda_i=\cot\Big(\theta_1+\frac{i-1}{4}\pi\Big),
\qquad i=1,2,3,4,
\]
where, for the minimal leaf,
\[
\cos 4\theta_1=c_0=\frac{m_2-m_1}{m_1+m_2}.
\]
Let
\[
a^2:=\max_{1\le i\le 4}\lambda_i^2=\max\{\lambda_1^2,\lambda_4^2\}.
\]
Since
\[
S_x\le a^2,
\qquad
S_\nu\le a^2,
\qquad
|A|\le a,
\qquad
|B|\le a,
\]
we first record the rough estimate obtained by discarding the nonnegative terms $2C^2$ and $B^2$ and bounding $|AB|\le a^2$:
\begin{equation}\label{eq:roughg4}
\Delta\ge (2n-2)-5a^2.
\end{equation}
Thus it is enough to require
\begin{equation}\label{eq:aineq}
a^2<\frac{2n-2}{5}=\frac{4}{5}(s-1).
\end{equation}

We do not assume any ordering between $m_1$ and $m_2$. Since $0<2\theta_1<\pi/2$, we have
\[
\cos 2\theta_1=\sqrt{\frac{1+\cos 4\theta_1}{2}}=\sqrt{\frac{m_2}{s}},
\qquad
\sin 2\theta_1=\sqrt{\frac{1-\cos 4\theta_1}{2}}=\sqrt{\frac{m_1}{s}}.
\]
Therefore,
\begin{align*}
\lambda_1^2
&=\cot^2\theta_1
=\frac{1+\cos 2\theta_1}{1-\cos 2\theta_1}
=\frac{1+\sqrt{m_2/s}}{1-\sqrt{m_2/s}},
\\
\lambda_4^2
&=\cot^2\Big(\theta_1+\frac{3\pi}{4}\Big)
=\frac{1+\sin 2\theta_1}{1-\sin 2\theta_1}
=\frac{1+\sqrt{m_1/s}}{1-\sqrt{m_1/s}}.
\end{align*}
If we set
\[
p:=\max\{m_1,m_2\},
\qquad
q:=\min\{m_1,m_2\}=s-p,
\]
then
\[
a^2=\frac{1+\sqrt{p/s}}{1-\sqrt{p/s}}.
\]

The rough estimate \eqref{eq:roughg4} gives the range $q\ge 5$ as follows. Condition \eqref{eq:aineq} is equivalent to
\[
\sqrt{\frac{p}{s}}<\frac{\frac45(s-1)-1}{\frac45(s-1)+1},
\]
and hence to
\begin{equation}\label{eq:qineq}
q=s-p>s\left[1-\left(\frac{\frac45(s-1)-1}{\frac45(s-1)+1}\right)^2\right]
=\frac{80s(s-1)}{(4s+1)^2}.
\end{equation}
Now,
\[
5(4s+1)^2-80s(s-1)=120s+5>0,
\]
so
\[
\frac{80s(s-1)}{(4s+1)^2}<5.
\]
Thus \eqref{eq:qineq} is certainly satisfied if $q\ge 5$, i.e. if
\[
\min\{m_1,m_2\}\ge 5.
\]

To handle the borderline case $q=4$, we keep the term $B^2$ instead of discarding it. Since $2C^2\ge 0$,
\[
\Delta\ge (2n-2)-2S_x-2S_\nu+B^2-AB.
\]
Completing the square yields
\[
B^2-AB=\left(B-\frac{A}{2}\right)^2-\frac{A^2}{4}\ge -\frac{A^2}{4}.
\]
Moreover, since $\sum_i x_i^2=1$, the Cauchy--Schwarz inequality gives
\[
A^2=\left(\sum_i x_i^2\lambda_i\right)^2\le \sum_i x_i^2\lambda_i^2=S_x.
\]
Therefore
\begin{equation}\label{eq:refinedg4}
\Delta\ge (2n-2)-\tfrac{9}{4}\,S_x-2S_\nu
\ge (2n-2)-\tfrac{17}{4}\,a^2.
\end{equation}
Hence it is enough to require
\begin{equation}\label{eq:refinedaineq}
a^2<\frac{4}{17}(2n-2)=\frac{16}{17}(s-1).
\end{equation}
If $q=4$, then $p=s-4$, so
\[
a^2=\frac{1+\sqrt{1-4/s}}{1-\sqrt{1-4/s}}.
\]
Substituting this into \eqref{eq:refinedaineq} and simplifying, we obtain
\[
16s^2-304s-1<0.
\]
For integers $s$, this holds exactly when $s\le 19$: the left-hand side is increasing for $s\ge 10$, equals $-1$ at $s=19$, and equals $319$ at $s=20$. Therefore \eqref{eq:refinedg4} yields $\Delta>0$ whenever $q=4$ and $s\le 19$. On the other hand, for $q=4$ and $s\ge 15$, Proposition~\ref{prop:g4opt} gives
\[
\Delta(X,\nu)\ge 4s-4-4a^2>0.
\]
Since the ranges $s\le 19$ and $s\ge 15$ cover all $s\ge 8$, the ACS condition holds for every multiplicity pair with $q=4$.

This proves the ACS condition in all cases listed in Theorem \ref{thm:main}. The index estimate then follows immediately from Theorem \ref{thm:ACS}.
\end{proof}

\begin{remark}
The argument above leaves the following cases unresolved by this method: the case $g=3$ with multiplicity $m=2$, and the case $g=4$ when at least one multiplicity belongs to $\{2,3\}$. The cases $g=3$ with $m=1$ and $g=6$ are excluded because the ambient Ricci curvature is not positive.
\end{remark}

\begin{remark}\label{rem:pointwise-threshold}
The bound $\min\{m_1,m_2\}\ge 4$ in Theorem~\ref{thm:main}(2) is sharp with respect to the pointwise method. Indeed, the principal curvature whose absolute value equals~$a$ has multiplicity $q:=\min\{m_1,m_2\}$. Since positive Ricci curvature forces $q\ge 2$ in the case $g=4$, the corresponding eigenspace has dimension $\ge 2$, and we may choose orthogonal unit vectors $X,\nu$ supported entirely in that eigenspace. For such a choice,
\[
C=0,
\qquad
S_x=S_\nu=a^2,
\qquad
A=B=\lambda \quad (\text{where } |\lambda|=a),
\]
and the terms $B^2-AB$ and $2C^2$ all vanish. Proposition \ref{prop:Delta} therefore gives
\[
\Delta(X,\nu)=(2n-2)-4a^2.
\]
Since pointwise positivity requires $\Delta(X,\nu)>0$ for \emph{every} orthogonal unit pair $(X,\nu)$, and the configuration above attains the value $(2n-2)-4a^2$, such positivity forces $(2n-2)-4a^2>0$. Substituting
\[
a^2=\frac{1+\sqrt{1-q/s}}{1-\sqrt{1-q/s}},
\qquad s=m_1+m_2,
\]
and simplifying, this condition is equivalent to $q>4-4/s$, and hence to $q\ge 4$ for integer~$q$ and $s\ge 4$. Thus $\min\{m_1,m_2\}\ge 4$ is a \emph{necessary} condition for any argument based only on pointwise positivity of~$\Delta$.

In the present paper, this necessary condition is also shown to be sufficient. Indeed, the completed-square estimate \eqref{eq:refinedg4} covers the range $8\le s\le 19$, while Proposition~\ref{prop:g4opt} proves that for $s\ge 15$ the moment-relaxation bound $F\le 4a^2$ is achieved precisely at the extremal configuration above. Since
\[
4(s-1)-4a^2=\frac{8}{1+\sqrt{1-4/s}}>0,
\]
the argument closes for every $s\ge 8$, i.e. for every multiplicity pair with $q=4$.
\end{remark}

\end{document}